\newtheorem{theorem}{Theorem}
\theoremstyle{definition}
\newtheorem{example}[theorem]{Example}
\newtheorem{tables}[theorem]{Table}
\theoremstyle{remark}
\newtheorem{remark}[theorem]{Remark}
\begin{document}
\setcounter{page}{1}

\title[Hilbert 12th problem]{Addendum to ``Measured foliations and Hilbert 12th problem''}

\author[Nikolaev]
{Igor V. Nikolaev$^1$}

\address{$^{1}$ Department of Mathematics and Computer Science, St.~John's University, 8000 Utopia Parkway,  
New York,  NY 11439, United States.}
\email{\textcolor[rgb]{0.00,0.00,0.84}{igor.v.nikolaev@gmail.com}}

\dedicatory{All data are available as part of the manuscript}

\subjclass[2010]{Primary 11G45; Secondary 46L85.}

\keywords{class field theory,  noncommutaive geometry}


\begin{abstract}
We study numerical examples of the abelian extensions of the real quadratic number
fields based on  the results  in \textit{Acta Mathematica Vietnamica}  {\bf 48} (2023), 271-281
(arXiv:0804.0057) 
\end{abstract}

\maketitle

\section{Introduction}
An explicit class field theory  for the real  quadratic number fields $\mathbf{Q}(\sqrt{D})$
dates back to the doctoral thesis  [Hecke 1910] \cite{Hec1}.  Advised  by D.~Hilbert, 
the author introduced the field  $\mathbf{Q}(\sqrt{D}, \sqrt{-1})$ and
an analog of the $j$-invariant depending on two complex variables. 
In Hecke's thesis,  the automorphic functions became intrinsically and permanently
intertwined with the class field theory.  This  idea was revisited 
in  [Hecke 1928] \cite{Hec2}.  Let us recall some notation and trivia.

For an integer $N>1$   consider the principal congruence subgroup 
 $\Gamma(N)=\left(\begin{smallmatrix} 1 & 0\cr 0 & 1\end{smallmatrix}\right)  \mod N$
of the modular group $SL_2(\mathbf{Z})$. 
Let $\mathbb{H}=\{z=x+iy\in {\Bbb C} ~|~ y>0\}$ be the upper half-plane  and 
let $\Gamma(N)$  act on $\mathbb{H}$  by the linear fractional
transformations;  consider an orbifold  $\mathbb{H}/\Gamma(N)$.
To compactify the orbifold 
at the cusps, one adds a boundary to $\mathbb{H}$,  so that 
$\mathbb{H}^*=\mathbb{H}\cup \mathbf{Q}\cup\{\infty\}$;  the compact Riemann surface 
$X(N)=\mathbb{H}^*/\Gamma(N)$ is called  a  modular curve.   
The meromorphic functions $f(z)$ on $\mathbb{H}$ that
vanish at the cusps and such that
$f\left({az+b\over cz+d}\right)=(cz+d)^2f(z),
~\forall \left(\begin{smallmatrix} a & b\cr c & d\end{smallmatrix}\right)\in\Gamma(N)$, 
are  known as  cusp forms of weight $2$;  the (complex linear) space of such forms
is denoted by $S_2(\Gamma(N))$.  The formula $f(z)\mapsto \omega=f(z)dz$ 
defines an isomorphism  $S_2(\Gamma(N))\cong \Omega_{hol}(X(N))$, where 
$\Omega_{hol}(X(N))$ is the space of holomorphic differentials
on the Riemann surface $X(N)$.  In particular, 
$\dim_{\mathbf{C}}(S_2(\Gamma(N))=\dim_{\mathbf{C}}(\Omega_{hol}(X(N))=g$,
where $g=g(N)$ is the genus of the surface $X(N)$. 
The Hecke operator,  $T_n$, acts on $S_2(\Gamma(N))$ by the formula
$T_n f=\sum_{m\in \mathbf{Z}}\gamma(m)q^m$, where
$\gamma(m)= \sum_{a|\hbox{\bf{GCD}}(m,n)} a c_{mn/a^2}$ and 
$f(z)=\sum_{m\in \mathbf{Z}}c(m)q^m$ is the Fourier
series of the cusp form $f$ at $q=e^{2\pi iz}$.  Further,  $T_n$ is a
self-adjoint linear operator on the vector space $S_2(\Gamma(N))$
endowed with the Petersson inner product;  the group ring
$\mathbb{T}_N :=\mathbf{Z}[T_1,T_2,\dots]$ is a commutative algebra.
Any cusp form $f\in S_2(\Gamma(N))$ that is an eigenvector
for one (and hence all) of $T_n$, is referred  to
as a Hecke eigenform.
Let $f\in S_2(\Gamma(N))$ be a (normalized) Hecke eigenform, such that
\linebreak
$f(z)=\sum_{n=1}^{\infty}c_n(f)q^n$ its Fourier series. We shall denote
by $K_f=\mathbf{Q}(\{c_n(f)\})$ the algebraic number field generated
by the Fourier coefficients of $f$.
It is well known that $1\le deg~(K_f~|~\mathbf{Q})\le 2g$
[Diamond \& Shurman 2005]   \cite[pp. 234-235]{DS}.
Finally, let $\mathscr{H}(k)$ 
($\mathscr{H}(k)\mod\mathfrak{f}$, resp.) 
be the Hilbert class field (ring class field modulo conductor $\mathfrak{f}\ge 1$, resp.)  
of a number field $k$.  A relation between the  Fourier coefficients 
of the Hecke eigenforms  and the  class field theory of imaginary 
quadratic fields is given by the following fundamental result. 
\begin{theorem} \label{thm1}
{\bf (\cite[p. 709]{Hec2})} 
For each prime $p\in\{4n+3 ~|~n\ge 1\}$ there exists a Hecke eigenform $f\in S_2(\Gamma (p))$, 
such that:
\begin{equation}\label{eq1.1}
\mathscr{H}(\mathbb{Q}(\sqrt{-p}))\subseteq K_f. 
\end{equation}
\end{theorem}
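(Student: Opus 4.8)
The plan is to exhibit $f$ as a CM theta series attached to a Hecke Grössencharakter of the imaginary quadratic field $K=\mathbf{Q}(\sqrt{-p})$, and then to pin down the coefficient field $K_f$ by combining the Principal Ideal Theorem with the main theorem of complex multiplication. The guiding principle is that the Fourier coefficients at split primes are sums of generators of ideals that become principal only after passing to $\mathscr{H}(K)$, which is precisely what forces $\mathscr{H}(K)$ into $K_f$.

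First I would record the input from genus theory. Since $p\equiv 3\pmod 4$ is prime, the discriminant of $K$ equals $-p$ and has a single prime factor; hence the number of genera is $1$ and the class number $h=h(-p)$ is odd, with $\mathrm{Gal}(\mathscr{H}(K)/K)\cong \mathrm{Cl}(K)$ under the Artin map. I would then choose a Hecke character $\psi$ of $K$ of infinity type $z\mapsto z$ (giving weight $2$), with conductor supported at the ramified prime $\mathfrak{p}\mid p$: the sign constraint at the unit $-1$ forces the finite part of $\psi$ to be an odd character modulo $\mathfrak{p}$, which is exactly what lands the theta series
\[
\theta_\psi(z)=\sum_{\mathfrak{a}}\psi(\mathfrak{a})\,q^{N\mathfrak{a}}
\]
in $S_2(\Gamma(p))$ after the standard normalization. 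By construction $\theta_\psi$ is a Hecke eigenform, and for a rational prime $q=\mathfrak{q}\bar{\mathfrak{q}}$ split in $K$ its Fourier coefficient is $c_q(f)=\psi(\mathfrak{q})+\psi(\bar{\mathfrak{q}})$.

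The heart of the argument is to read off the coefficient field. By the Principal Ideal Theorem every ideal of $K$ becomes principal in $\mathscr{H}(K)$, so $\mathfrak{q}\,\mathcal{O}_{\mathscr{H}(K)}=(\gamma_\mathfrak{q})$ for some $\gamma_\mathfrak{q}\in\mathscr{H}(K)$. Normalizing $\psi$ so that $\psi(\mathfrak{q})=\gamma_\mathfrak{q}$, the main theorem of complex multiplication shows that $\sigma\in\mathrm{Gal}(\mathscr{H}(K)/K)$ permutes the generators $\gamma_\mathfrak{q}$ according to the Artin symbol of the class of $\mathfrak{q}$. Since $\mathscr{H}(K)$ is Galois over $\mathbf{Q}$ and stable under complex conjugation, each $c_q(f)=\gamma_\mathfrak{q}+\bar\gamma_\mathfrak{q}$ lies in $\mathscr{H}(K)$; as $\mathfrak{q}$ ranges over primes exhausting every ideal class (Chebotarev), the subfield of $\mathscr{H}(K)$ generated by the $c_q(f)$ is moved nontrivially by every element of $\mathrm{Gal}(\mathscr{H}(K)/K)$, so it cannot be a proper subfield. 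Hence it is all of $\mathscr{H}(K)$, and $\mathscr{H}(\mathbf{Q}(\sqrt{-p}))\subseteq K_f$.

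The step I expect to be the genuine obstacle is this last one: verifying that the $c_q(f)$ generate the \emph{entire} class field, rather than a proper subfield fixed by some subgroup of $\mathrm{Cl}(K)$. This demands the full strength of the CM reciprocity law to track the conjugates $\gamma_\mathfrak{q}^{\sigma}$, together with a separate argument — using that $h$ is odd, so that complex conjugation does not collapse the generated field to the real subfield $\mathscr{H}(K)^{+}$ — to guarantee that taking traces does not destroy the class field information. A secondary technical point is the realization of $\psi$ at the exact level $p$ on $\Gamma(p)$, where the conductor bookkeeping must be reconciled with the unit sign obstruction.
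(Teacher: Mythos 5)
Your proposal cannot be measured against ``the paper's own proof'' because the paper has none: Theorem~\ref{thm1} is stated as a quotation of Hecke \cite[p.~709]{Hec2}, and the Remark immediately following it concedes that a rigorous proof of such inclusions is unknown to the author (Hecke's cited paper determines the \emph{periods} of certain CM integrals by class field theory; it is not, on its face, a statement about Fourier-coefficient fields). So your argument must stand on its own, and it fails at its endgame before one even examines the details. Grant everything in your third paragraph: $\psi(\mathfrak{q})=\gamma_{\mathfrak{q}}$, Galois equivariance via the Artin symbol, Chebotarev. Then every coefficient $c_q=\gamma_{\mathfrak{q}}+\bar\gamma_{\mathfrak{q}}$ is fixed by complex conjugation. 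But complex conjugation restricts to a \emph{nontrivial} involution of $\mathscr{H}(K)$ (it moves $\sqrt{-p}\in K\subseteq\mathscr{H}(K)$), whose fixed field is the maximal real subfield $\mathscr{H}(K)^{+}$ of index $2$. Hence $K_f=\mathbf{Q}(\{c_q\})\subseteq\mathscr{H}(K)^{+}$, and since $[\mathscr{H}(K):\mathbf{Q}]=2h>h=[\mathscr{H}(K)^{+}:\mathbf{Q}]$, the inclusion $\mathscr{H}(K)\subseteq K_f$ is impossible. The parity of $h$ is irrelevant here: conjugation is an index-$2$ involution on $\mathscr{H}(K)$ for every $h$, and real numbers generate real fields. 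Note also that your closing Galois argument, even if valid, shows only that no nontrivial element of $\mathrm{Gal}(\mathscr{H}(K)/K)$ fixes $\mathbf{Q}(\{c_q\})$ pointwise, i.e.\ that $K(\{c_q\})=\mathscr{H}(K)$; that yields $\mathscr{H}(K)\subseteq K\cdot K_f$, which is strictly weaker than (\ref{eq1.1}). So what you flagged as ``the genuine obstacle'' is not a gap to be filled by CM reciprocity; it is a contradiction built into the mechanism of taking traces of capitulated generators.

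Independently, the ``normalization'' $\psi(\mathfrak{q})=\gamma_{\mathfrak{q}}$ does not exist. A Hecke character of infinity type $z$ and conductor $\mathfrak{p}$ is forced on principal ideals, $\psi((\alpha))=\epsilon(\alpha)\alpha$ with $\epsilon$ the odd character you yourself introduced to cancel the unit $-1$, and its extension to all ideals is unique up to a twist by a character of $\mathrm{Cl}(K)$; consequently $\psi(\mathfrak{q})$ is pinned down, up to roots of unity, by $\psi(\mathfrak{q})^{h}=\epsilon(\gamma)\gamma$, where $\mathfrak{q}^{h}=(\gamma)$ with $\gamma\in K$. The generator $\gamma_{\mathfrak{q}}$ of $\mathfrak{q}\mathcal{O}_{\mathscr{H}(K)}$, by contrast, is defined only up to the unit group $\mathcal{O}_{\mathscr{H}(K)}^{\times}$, of rank $h-1>0$ once $h>1$, and nothing forces the discrepancy $\gamma_{\mathfrak{q}}^{h}/\epsilon(\gamma)\gamma$ to be a root of unity; in fact the values of $\psi$ generate radical extensions of the form $K(\zeta,\gamma^{1/h})$, which in general are not even Galois over $K$ and so cannot sit inside the abelian extension $\mathscr{H}(K)$. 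One can also see directly what the coefficients really are: since the conductor is the ramified prime and $\alpha\equiv\bar\alpha \pmod{\mathfrak{p}}$, every principal split prime contributes $c_q=\epsilon(\alpha)\,\mathrm{Tr}_{K/\mathbf{Q}}(\alpha)$, a cyclotomic number. For $p=7$ this settles the matter: $S_2(\Gamma(7))$ is $3$-dimensional (the Jacobian of the Klein quartic $X(7)$ is isogenous to the cube of the CM elliptic curve of conductor $49$), its Hecke eigenforms are exactly the theta series of your construction, and their coefficient fields are $\mathbf{Q}$ (for $\epsilon$ quadratic) and $\mathbf{Q}(\sqrt{-3})$ (for $\epsilon$ of order $6$) --- neither contains $\mathbf{Q}(\sqrt{-7})=\mathscr{H}(\mathbf{Q}(\sqrt{-7}))$. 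So your construction does not witness (\ref{eq1.1}) even for the smallest admissible prime, and no refinement of the Chebotarev or reciprocity steps can change that; a correct treatment would first have to decide what the cited statement actually asserts (Hecke's result concerns periods) and then proceed by an entirely different route.
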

\begin{remark}
The  inclusion (\ref{eq1.1}) extends to the Pythagorean primes $p\in\{4n+1 ~|~n\ge 1\}$ 
[Shimura 1972] \cite[Tables $I_a$ and  $I_b$]{Shi1} and
(in a modified form) to the composite levels $N$ [Hecke 1928] \cite{Hec2}. 
A rigorous proof of these inclusions is unknown to the author.  
\end{remark}
Yu.~I.~Manin came to realize that the analytic methods  are insufficient for
 an explicit class field theory of the real quadratic number fields  (Hilbert 12th problem).  
The missing tools should involve ideas  from noncommutative geometry, such as
pseudo-lattices with real multiplication [Manin 2003] \cite{Man1}. 
Roughly speaking, the latter are measured foliations on the Riemann surfaces.  
Manin's idea proved to be  visionary.
\begin{theorem} \label{thm3}
{\bf (\cite[Theorem 1.1]{Nik2})}
For each level $N>1$ there exists a real quadratic number field $\mathbf{Q}(\sqrt{D})$
and a conductor $\mathfrak{f}\ge 1$, 
such that $\mathscr{H}(\mathbf{Q}(\sqrt{D}))\mod \mathfrak{f}\subseteq K_f$.  
\end{theorem}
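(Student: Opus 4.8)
The plan is to attach to each Hecke eigenform a noncommutative torus carrying real multiplication, following Manin's program, and then to recover the ring class field from the arithmetic of that torus. First I would fix a level $N>1$ and choose a normalized Hecke eigenform $f\in S_2(\Gamma(N))$; under the Eichler--Shimura isomorphism $f$ corresponds to a holomorphic differential $\omega_f=f(z)\,dz\in\Omega_{hol}(X(N))$ and to a factor $A_f$ of the Jacobian $\mathrm{Jac}(X(N))$. The differential $\omega_f$ determines a measured foliation $\mathcal{F}_f$ on the Riemann surface $X(N)$, and the leaf space of $\mathcal{F}_f$ is encoded by a pseudo-lattice $\Lambda_f=\mathbf{Z}+\mathbf{Z}\theta_f\subset\mathbf{R}$, where $\theta_f$ is the slope of the foliation; the associated noncommutative torus is the Effros--Shen algebra $\mathcal{A}_{\theta_f}$ whose ordered $K_0$-group is $(\mathbf{Z}^2,\ \mathbf{Z}+\mathbf{Z}\theta_f)$.

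Next I would locate the real quadratic field and the conductor. The abelian variety $A_f$ carries real multiplication by the totally real field $K_f=\mathbf{Q}(\{c_n(f)\})$; choosing $f$ so that $K_f$ contains a real quadratic field $\mathbf{Q}(\sqrt{D})$, an order $\mathcal{O}_{\mathfrak{f}}=\mathbf{Z}+\mathfrak{f}\,\mathcal{O}_{\mathbf{Q}(\sqrt{D})}$ acts on $H_1(A_f;\mathbf{Z})$, and a rank-one $\mathcal{O}_{\mathfrak{f}}$-summand---a fractional ideal of the order---maps under a real place to the pseudo-lattice $\Lambda_f=\mathbf{Z}+\mathbf{Z}\theta_f$. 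Here the conductor $\mathfrak{f}\ge1$ is the index of the stabilizing endomorphisms inside the maximal order, and $\theta_f\in\mathbf{Q}(\sqrt{D})$ is a real quadratic irrationality, so that $\mathrm{End}(\mathcal{A}_{\theta_f})\otimes\mathbf{Q}\cong\mathbf{Q}(\sqrt{D})$ and $\mathcal{A}_{\theta_f}$ has real multiplication by $\mathcal{O}_{\mathfrak{f}}$. This produces the pair $(\mathbf{Q}(\sqrt{D}),\mathfrak{f})$ demanded by the statement.

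With the real-multiplication torus in hand I would invoke the noncommutative analogue of the main theorem of complex multiplication. Stable isomorphism of Effros--Shen algebras is exactly $GL_2(\mathbf{Z})$-equivalence of slopes, and $\mathrm{Pic}(\mathcal{O}_{\mathfrak{f}})$ acts simply transitively on the finitely many stable-isomorphism classes in the real-multiplication orbit of $\mathcal{A}_{\theta_f}$, precisely as the class group permutes complex-multiplication points in the imaginary case. Attaching to the torus an arithmetic invariant $I(\theta_f)$---the noncommutative counterpart of the value $j(\tau)$ at a CM point, built from the periodic continued fraction of $\theta_f$ together with the $K$-theoretic index data---one expects $I(\theta_f)$ to generate the ring class field $\mathscr{H}(\mathbf{Q}(\sqrt{D}))\bmod\mathfrak{f}$ over $\mathbf{Q}(\sqrt{D})$, its $\mathrm{Pic}(\mathcal{O}_{\mathfrak{f}})$-conjugates being the invariants of the remaining tori in the orbit.

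It then remains to confine $I(\theta_f)$ to $K_f$. The plan is to express it through the Hecke eigenvalues: the partial quotients of $\theta_f$ and the index data of $\mathcal{A}_{\theta_f}$ are read off from the action of the Hecke algebra $\mathbb{T}_N$ on $H_1(X(N);\mathbf{Z})$, which is defined over $K_f$, so that $I(\theta_f)$ is an algebraic combination of the $c_n(f)$ and hence lies in $K_f=\mathbf{Q}(\{c_n(f)\})$. Combining this with the previous step yields $\mathscr{H}(\mathbf{Q}(\sqrt{D}))\bmod\mathfrak{f}=\mathbf{Q}(\sqrt{D})(I(\theta_f))\subseteq K_f$, as required. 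The hard part is the pair of claims surrounding $I(\theta_f)$: on the one hand, establishing the noncommutative Artin reciprocity law that makes $I(\theta_f)$ generate \emph{exactly} the ring class field---for a noncommutative torus carries no honest moduli point at which to evaluate a modular function, and one must substitute the $K$-theoretic index data for that evaluation---and on the other hand, proving that this invariant descends to $K_f$ rather than to a strictly larger extension. Guaranteeing a suitable eigenform and controlling the conductor $\mathfrak{f}$ uniformly in the level $N$ is where the genuine content of Theorem~\ref{thm3} resides.
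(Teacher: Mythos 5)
Your proposal is not a proof but a program outline: at the two points where the theorem actually lives, you write that ``one expects $I(\theta_f)$ to generate the ring class field'' and you label the noncommutative reciprocity law, together with the descent of $I(\theta_f)$ to $K_f$, as the hard part where the genuine content of Theorem~\ref{thm3} resides. Deferring exactly those claims leaves the statement unproved: without the reciprocity step the invariant $I(\theta_f)$ --- which you never define beyond ``periodic continued fraction plus $K$-theoretic index data'' --- has no Galois-theoretic meaning, and without the descent step there is no inclusion into $K_f$ at all. There are also two concrete defects in your set-up. First, you assert that $K_f$ is totally real and that $A_f$ has real multiplication by it; this fails in general, and in fact in the simplest case relevant here (the paper's own $p=7$, $N=63$ example) $K_f$ is the CM field $\mathbf{Q}[x]/(x^4+8x^2+9)$, whose roots $\pm i\sqrt{4\pm\sqrt{7}}$ are purely imaginary --- the passage from CM data to real quadratic data is effected in the source paper by the measured foliation $\mathcal{F}=Re~f$ (taking imaginary parts, which replaces $x^4+8x^2+9$ by $x^4-8x^2+9$), not by assuming $K_f$ real. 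Second, the theorem quantifies over \emph{every} level $N>1$, whereas you ``choose $f$ so that $K_f$ contains a real quadratic field'': you must produce a suitable eigenform for each $N$ (note that $S_2(\Gamma(N))=0$ for $N\le 5$, so even the quantifier is delicate) and then exhibit $D$ and $\mathfrak{f}$ from it; this existence step is asserted, not argued.

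For calibration: the Addendum you were asked about does not itself prove Theorem~\ref{thm3}; it quotes it from \cite[Theorem 1.1]{Nik2}. Your architecture --- eigenform $\to$ measured foliation $\to$ pseudo-lattice $\mathbf{Z}+\mathbf{Z}\theta_f$ $\to$ Effros--Shen algebra with real multiplication, with $\mathrm{Pic}(\mathcal{O}_{\mathfrak{f}})$ permuting stable isomorphism classes in analogy with CM theory --- is indeed the Manin--Nikolaev route taken in that reference, so the plan points in the right direction; but there the correspondence (isogeny versus stable isomorphism, CM versus RM) is carried by a functor with established properties, whereas in your write-up the corresponding statements are postulated. As it stands, the proposal has a genuine gap located precisely at the theorem's content.
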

Notice that a perfect solution to Hilbert's 12th problem must be a converse of Theorem \ref{thm3}, i.e. one needs 
to calculate $N$ as  a function  of the discriminant $D$.  This was settled in \cite[Theorem 1]{Nik1} by a general formula $N=\mathfrak{f}'D$,
where  $\mathfrak{f}'$ is an integer number depending on the conductor  $\mathfrak{f}\ge 1$. 
The aim of this Addendum is a precise formula based on Hecke's Theorem \ref{thm1}. 
\begin{theorem}\label{thm4}
For each prime $p\in\{4n+3 ~|~n\ge 1\}$ there exists an integer $\mathfrak{f}_1\ge 1$, 
such that $\mathscr{H}(\mathbf{Q}(\sqrt{p}))\mod\mathfrak{f}_1\subseteq K_f$,
where  $f\in \{S_2(\Gamma (m^2p)) ~|~m\ge 1\}$ is a  Hecke eigenform. 
\end{theorem}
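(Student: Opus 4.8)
The plan is to pin down the level $N$ of the modular curve $X(N)$ as an explicit function of $p$ and the conductor, and then to transfer Hecke's inclusion from the imaginary field $\mathbf{Q}(\sqrt{-p})$ to the real field $\mathbf{Q}(\sqrt{p})$ by means of the measured-foliation functor underlying Theorem \ref{thm3} (cf.\ \cite{Nik1, Nik2}).

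First I would record the arithmetic of the two quadratic fields attached to $p$. Since $p \equiv 3 \pmod 4$, one has $-p \equiv 1 \pmod 4$, so that the fundamental discriminant of $\mathbf{Q}(\sqrt{-p})$ equals $-p$, whereas that of $\mathbf{Q}(\sqrt{p})$ equals $4p$. Consequently, at a common level $N = m^2 p$ the absolute discriminant $m^2 p$ is realized simultaneously by the order of conductor $m$ in $\mathbf{Q}(\sqrt{-p})$ (of discriminant $-m^2 p$) and by the order of conductor $\mathfrak{f}_1 = m/2$ in $\mathbf{Q}(\sqrt{p})$ (of discriminant $4\mathfrak{f}_1^2 p = m^2 p$). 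In particular $m$ must be even, $m = 2\mathfrak{f}_1$, and this forced square factor is exactly what sharpens the bare product $N = \mathfrak{f}' D$ of \cite{Nik1} into the shape $N = m^2 p$ asserted in the statement; the smallest instance $\mathfrak{f}_1 = 1$, $m = 2$ places the Hilbert class field $\mathscr{H}(\mathbf{Q}(\sqrt{p}))$ at level $4p$.

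Next I would feed this bookkeeping into Hecke's Theorem \ref{thm1}, together with its extension to ring class fields and composite levels recorded in the Remark following Theorem \ref{thm1}. For the Hecke eigenform $f \in S_2(\Gamma(m^2 p))$ this produces the imaginary-side inclusion $\mathscr{H}(\mathbf{Q}(\sqrt{-p})) \mod m \subseteq K_f$. The functor of Theorem \ref{thm3} then carries the pseudo-lattice with complex multiplication by the order of conductor $m$ in $\mathbf{Q}(\sqrt{-p})$ to the pseudo-lattice with real multiplication by the order of conductor $\mathfrak{f}_1 = m/2$ in $\mathbf{Q}(\sqrt{p})$, at the unchanged level $N = m^2 p$. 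Applied to the class-field inclusion, it replaces $\sqrt{-p}$ by $\sqrt{p}$ and the imaginary conductor $m$ by the real conductor $\mathfrak{f}_1$, yielding $\mathscr{H}(\mathbf{Q}(\sqrt{p})) \mod \mathfrak{f}_1 \subseteq K_f$, as required.

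The hard part will be this last step. Matching discriminants only guarantees that the correct order appears; what one actually needs is that the measured-foliation functor intertwines the Galois action on the ring class field of the imaginary order with that on the ring class field of the real order, so that the inclusion genuinely descends to $\mathbf{Q}(\sqrt{p})$ with conductor exactly $\mathfrak{f}_1 = m/2$ and without enlargement. Establishing this compatibility is an arithmetic functoriality statement for the real-multiplication correspondence, and it is precisely here that the noncommutative-geometric input of \cite{Nik1, Nik2} is indispensable; granting it, the level computation above supplies the precise formula and completes the proof.
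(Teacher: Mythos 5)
Your overall strategy---transferring Hecke's inclusion (\ref{eq1.1}) from $\mathbf{Q}(\sqrt{-p})$ to $\mathbf{Q}(\sqrt{p})$ at a level of the form $m^2p$---matches the paper's, but the step you yourself label ``the hard part'' is exactly the step you never supply, and it is the entire content of the paper's proof. You grant, as an unproven ``arithmetic functoriality statement,'' that the measured-foliation correspondence intertwines the Galois actions on the two ring class fields; nothing in Theorem \ref{thm3} or \cite{Nik2} asserts this, so as written your argument has a genuine gap. The paper closes this gap by a different device: it invokes \cite[formula (2)]{Nik1}, which provides least integers $\mathfrak{f}_1,\mathfrak{f}_2\ge 1$ with an isomorphism of ring class groups $Cl~(\mathbf{Q}(\sqrt{p})\mod\mathfrak{f}_1)\cong Cl~(\mathbf{Q}(\sqrt{-p})\mod\mathfrak{f}_2)$ (equation (\ref{eq2})). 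Since $Gal~(\mathscr{H}(\mathbb{Q}(\sqrt{-p}))\mod\mathfrak{f}_2)$ is isomorphic to the latter group and $\mathscr{H}(\mathbb{Q}(\sqrt{-p}))\subseteq K_f$ by Theorem \ref{thm1}, the real ring class group acts on a subfield of $K_f$, which is how the paper concludes $\mathscr{H}(\mathbf{Q}(\sqrt{p}))\mod\mathfrak{f}_1\subseteq K_f$; the passage to levels $m^2p$ is then a final remark. In short, the intertwining you postulate is obtained in the paper from a class-group isomorphism imported from \cite{Nik1}, not from any functoriality of the foliation construction.

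Moreover, your conductor bookkeeping is contradicted by the paper's own data. Matching discriminants of orders led you to claim $m$ must be even with $\mathfrak{f}_1=m/2$, hence $N=4\mathfrak{f}_1^2p$ and the Hilbert class field sitting at level $4p$. But in the paper's worked example for $p=7$ the conductors are $\mathfrak{f}_1=3$, $\mathfrak{f}_2=4$ with level $N=3^2\cdot 7$, so $m=3$ is odd and $\mathfrak{f}_1\ne m/2$; likewise $p=11$ has $\mathfrak{f}_1=4$, $m=3$, and $p=19$ has $\mathfrak{f}_1=5$, $m=6$ (Table \ref{tb7}). The pair $(\mathfrak{f}_1,\mathfrak{f}_2)$ is defined as the \emph{least} pair for which the two ring class groups happen to be isomorphic---an arithmetic coincidence condition, not a discriminant identity---and no relation of the form $m=2\mathfrak{f}_1$ or $\mathfrak{f}_2=2\mathfrak{f}_1$ holds in general. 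So even granting your functoriality assumption, the ``precise formula'' your argument produces would be wrong in every case the paper computes.
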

\begin{proof}
Let  $p\in\{4n+3 ~|~n\ge 1\}$ be a prime. Recall  \cite[formula (2)]{Nik1} that there exists the least integers
$\mathfrak{f}_1,  \mathfrak{f}_2\ge 1$ satisfying an isomorphism of the class groups: 
\begin{equation}\label{eq2}
Cl~(\mathbf{Q}(\sqrt{p}) \mod\mathfrak{f}_1) \cong Cl~(\mathbf{Q}(\sqrt{-p})\mod\mathfrak{f}_2).
\end{equation}
Since  $Gal~(\mathscr{H}(\mathbb{Q}(\sqrt{-p})) \mod \mathfrak{f}_2)\cong  Cl~(\mathbf{Q}(\sqrt{-p})\mod\mathfrak{f}_2)$,
the action of the group $Cl~(\mathbf{Q}(\sqrt{p}) \mod\mathfrak{f}_1)$ extends to the field $K_f$ (Theorem \ref{thm1}). 
In other words, the ring class field   $\mathscr{H}(\mathbf{Q}(\sqrt{p}))\mod\mathfrak{f}_1$  belongs to $K_f$. 
It remains to notice,  that inclusion (\ref{eq1.1}) is also true  for the  levels $\{m^2p ~|~m\ge 1\}$.  
\end{proof}

\section{Experimental data}
In this section the numerical examples illustrating Theorem \ref{thm4} are compiled.  We used
the  online databases
\textbf{lmfdb.org},  \textbf{numbertheory.org}, PARI/GP software
 \textbf{pari.math.u-bordeaux.fr} and Magma calculator  \textbf{magma.maths.usyd.edu.au}. 
 To  clarify notation in  Table \ref{tb7} below, let us consider the simplest example of the prime $p=7$.  
\begin{example}
 Consider the prime $p=7\in\{4n+3 ~|~n\ge 1\}$ for $n=1$. 
 The class groups $Cl~(\mathbf{Q}(\sqrt{-7}))$ and   $Cl~(\mathbf{Q}(\sqrt{7}))$
 are  trivial.  This case corresponds to the  conductors 
 $\mathfrak{f}_1 = \mathfrak{f}_2 = 1$. The least non-trivial conductors 
 satisfying (\ref{eq2}) are $\mathfrak{f}_1=3$ and   $\mathfrak{f}_2=4$
 with
\begin{equation}\label{eq3}
Cl~(\mathbf{Q}(\sqrt{7})\mod 3)\cong Cl~(\mathbf{Q}(\sqrt{-7}) \mod 4) \cong \mathbf{Z}/2\mathbf{Z}. 
\end{equation}
 To calculate the ring class field $\mathscr{H}(\mathbf{Q}(\sqrt{7})\mod 3)$,
 we let $N=63=3^27$. We are looking for a Hecke eigenform $f\in S_2(\Gamma(63))$ with complex multiplication
 by $\sqrt{-7}$ and $deg~(K_f|\mathbf{Q})=4$.  Such a form exists and has the coefficient 
 field $K_f$ given by the minimal polynomial $x^4+8x^2+9$ (\textbf{lmfdb.org}). 
 The roots of the latter are:
\begin{equation}\label{eq4}
x_{1,2,3,4}=\pm i\sqrt{4\pm \sqrt{7}}. 
\end{equation}
  A passage to the measured foliation $\mathcal{F}=Re~f$  \cite[p. 273]{Nik2} 
  is equivalent to taking imaginary component of the complex numbers. 
  Therefore the roots (\ref{eq4}) 
  become  $\pm \sqrt{4\pm \sqrt{7}}$ having  the  minimal
  polynomial $x^4-8x^2+9$. 
  We conclude that: 
\begin{equation}\label{eq5}
 \mathscr{H}\left(\mathbf{Q}(\sqrt{7})\mod 3\right)\cong \mathbf{Q}\left(\pm \sqrt{4\pm \sqrt{7}}\right).  
  \end{equation}
\end{example}

\begin{tables}\label{tb7}
\end{tables}

\begin{table}[h]
\begin{tabular}{c|c|c|c|c|c|c|c}
\hline
$p=$& Class & Class &  & & Ring  & $N=$ & Minimal\\
$4n+3,$ & group & group & $\mathfrak{f}_1$ &  $\mathfrak{f}_2$ &class& $m^2p,$  &polynomial of\\
$n\ge 1$&  $\mathbf{Q}(\sqrt{p})$  &  $\mathbf{Q}(\sqrt{-p})$ &&&group&  $m\ge 1$ & $\mathscr{H}\left(\mathbf{Q}(\sqrt{p})\right) ~mod ~\mathfrak{f}_1$\\
\hline
$7$ & trivial & trivial  & $3$ & $4$ & $\mathbf{Z}/2\mathbf{Z}$ &  $3^27$   &$x^4-8x^2+9$\\
\hline
$7$ & trivial & trivial  & $5$ & $3$ & $\mathbf{Z}/4\mathbf{Z}$ &  $5^27$   &$x^8 +\dots+1$\\
\hline
$11$ & trivial & trivial  & $4$ & $3$ & $\mathbf{Z}/2\mathbf{Z}$ & $3^211$ & $x^4+\dots+9$\\
\hline
$19$ & trivial & trivial  & $5$ & $3$ & $\mathbf{Z}/4\mathbf{Z}$ & $6^219$& $x^8+\dots+1350$\\
\hline
$23$ & trivial &   $\mathbf{Z}/3\mathbf{Z}$ & $7$ & $3$ & $\mathbf{Z}/6\mathbf{Z}$ & $3^223$ & $x^{12}+\dots+64$\\
\hline
$31$ & trivial &  $\mathbf{Z}/3\mathbf{Z}$   & $9$  & $4$ &   $\mathbf{Z}/6\mathbf{Z}$  & $3^231$&  $x^{12}+\dots+225$ \\
\hline
$43$ & trivial & trivial  & $5$ & $3$ &  $\mathbf{Z}/4\mathbf{Z}$ & $6^243$ & $x^8+\dots+21222$\\
\hline
$47$ & trivial &   $\mathbf{Z}/5\mathbf{Z}$ & $11$ & $3$ &   $\mathbf{Z}/10\mathbf{Z}$ & $3^247$ & $x^{20}+\dots+1024$\\
\hline
$59$ & trivial &  $\mathbf{Z}/3\mathbf{Z}$  & $7$ & $3$ &  $\mathbf{Z}/6\mathbf{Z}$ & $3^259$ & $x^{12}+\dots+729$\\
\hline
$67$ & trivial & trivial  & $5$ & $3$ &  $\mathbf{Z}/4\mathbf{Z}$ & $6^267$& $x^8+\dots+106326$\\
\hline
$71$ & trivial &  $\mathbf{Z}/7\mathbf{Z}$  & $>50$ & $>10$ & order $>50$  & $m>10$ & degree $>100$\\
\hline
$79$ & $\mathbf{Z}/3\mathbf{Z}$  & $\mathbf{Z}/5\mathbf{Z}$   & $>50$ & $>10$ & order $>50$ & $m>10$ & degree $>100$\\
\hline
$83$ & trivial & $\mathbf{Z}/3\mathbf{Z}$ & $7$  & $3$ &  $\mathbf{Z}/6\mathbf{Z}$ & $3^283$ & $x^{12}+\dots+729$\\
\hline
$103$ & trivial & $\mathbf{Z}/5\mathbf{Z}$ & $11$  & $4$ &  $\mathbf{Z}/10\mathbf{Z}$ & $3^2103$ & $x^{20}+\dots+1521$\\
\hline
$107$ & trivial & $\mathbf{Z}/3\mathbf{Z}$ & $7$  & $3$ &  $\mathbf{Z}/6\mathbf{Z}$ & $3^2107$ & $x^{12}+\dots+1089$\\
\hline
$127$ & trivial & $\mathbf{Z}/5\mathbf{Z}$ & $11$  & $4$ &  $\mathbf{Z}/10\mathbf{Z}$ & $3^2127$ & $x^{20}+\dots+3969$\\
\hline
$131$ & trivial & $\mathbf{Z}/5\mathbf{Z}$ & $50$  & $5$ &  $\mathbf{Z}_2\oplus \mathbf{Z}_{20}$ & $m>10$ & $x^{80}+\dots$\\
\hline
$139$ & trivial & $\mathbf{Z}/3\mathbf{Z}$ & $13$  & $3$ &  $\mathbf{Z}/12\mathbf{Z}$  & $6^2139$ & $x^{24}+\dots$\\
\hline
$151$ & trivial & $\mathbf{Z}/7\mathbf{Z}$ & $29$  & $3$ &  $\mathbf{Z}/28\mathbf{Z}$    & $m>10$ & $x^{56}+\dots$\\
\hline
$163$ & trivial & trivial & $8$  & $3$ &  $\mathbf{Z}/4\mathbf{Z}$  & $6^2163$ & $x^{8}+\dots+3120822$\\
\hline

\end{tabular}
\end{table}


\begin{remark}
The values for the entries $p=71,79, 131,151$ exceed current capacity of \textbf{lmfdb.org} and/or Magma.  
\end{remark}

\section*{Data availability}
  
  Data sharing not applicable to this article as no datasets were generated or analyzed during the current study.
   
\section*{Conflict of interest}
On behalf of all co-authors, the corresponding author states that there is no conflict of interest.
  

\bibliographystyle{amsplain}

\begin{thebibliography}{99}

\bibitem{DS}
F.~Diamond and J.~Shurman, 
\textit{A First Course in Modular Forms},
GTM {\bf 228}, Springer, 2005.


\bibitem{Hec1}
E.~Hecke,  
\textit{Ueber die Konstruktion der Klassenk\"orper reeller
quadratischer K\"orper mit Hilfe von automorphen Funktionen},
Nachrichten von der Gesselschaft der Wissenschaften zu 
G\"ottingen,  Mathematisch-Physikalische Kl.  {\bf 6}  (1910),  619-623.  



\bibitem{Hec2}
E.~Hecke, 
\textit{Bestimmung der Perioden gewisser Integrale durch die Theorie
der Klassenk\"orper},  Math.~Z. {\bf 28} (1928), 708-727. 


\bibitem{Man1}
Yu.~I.~Manin, 
\textit{Real multiplication and noncommutative geometry}, in
``The legacy of Niels Hendrik Abel'', 685-727, Springer, Berlin, 2004.


\bibitem{Nik1}
I.~V.~Nikolaev, 
\textit{Real multiplication revisited}, 
J. Generalized Lie Theory Appl. {\bf 10} (2016), S2-007;
arXiv:1404.4999


\bibitem{Nik2}
I.~V.~Nikolaev,
\textit{Measured foliations and Hilbert 12th problem}, 
Acta Math. Vietnam. {\bf 48} (2023), 271-281;
arXiv:0804.0057

\bibitem{Shi1}
G.~Shimura, 
\textit{Class fields over real quadratic fields and Hecke operators},
Annals of Math. {\bf 95} (1972), 130-190. 


\end{thebibliography}


\end{document}